\newcommand{\bu}{\mathbf{u}}
\newcommand{\bv}{\mathbf{v}}
\newcommand{\bL}{{\mathbf{L}}}
\newcommand{\bW}{{\mathbf{W}}}
\newcommand{\bM}{{\mathbf{M}}}
\newcommand{\bH}{{\mathbf{H}}}
\newcommand{\bC}{{\mathbf{C}}}
\newcommand{\bef}{{\mathbf{f}}}
\newcommand{\bz}{{\mathbf{z}}}
\newcommand{\bsigma}{{\boldsymbol{\sigma}}}
\newcommand{\bw}{{\mathbf{w}}}
\newcommand{\be}{\mathbf{e}}
\newcommand{\bX}{\mathbf{X}}
\newcommand{\bY}{\mathbf{Y}}
\newcommand{\bI}{\mathbf{I}}
\newcommand{\calG}{{\mathcal{G}}}
\newcommand{\calP}{\mathcal{P}}
\newcommand{\calA}{\mathcal{A}}
\newcommand{\calB}{\mathcal{B}}
\newcommand{\calF}{\mathcal{F}}
\newcommand{\calC}{\mathcal{C}}
\newcommand{\polL}{\mathbb{L}}
\newcommand{\polT}{\mathbb{T}}
\newcommand{\polP}{\mathbb{P}}
\newcommand{\polQ}{\mathbb{Q}}
\newcommand{\GRAD}{\nabla}
\DeclareMathOperator{\DIV}{div}
\DeclareMathOperator{\diam}{diam}
\DeclareMathOperator{\tr}{tr}
\DeclareMathOperator{\dist}{dist}
\newcommand{\diff}{\, \mbox{\rm d}}
\newcommand{\T}{\mathscr{T}}
\newcommand{\ie}{i.e.,}
\newcommand{\xR}{\mathbb{R}}
\newcommand{\xN}{\mathbb{N}}
\newcommand{\beps}{\boldsymbol{\epsilon}}
\newcommand{\point}{{\mathtt{x}}}
\theoremstyle{theorem}
\newtheorem{thrm}{Theorem}
\newtheorem{prpstn}{Proposition}
\newtheorem{crllr}{Corollary}
\theoremstyle{definition}
\newtheorem{rmrk}{Remark}
\newtheorem{dfntn}{Definition}
\DeclareMathAlphabet{\mathpzc}{OT1}{pzc}{m}{it}
\newcommand{\TheTitle}{The linear elasticity system under singular forces}
\newcommand{\ShortTitle}{Linear elasticity and singular forces}
\begin{document}
\title[\ShortTitle]{\TheTitle}

\author[A.~Allendes]{Alejandro Allendes}
\address[A.~Allendes]{Departamento de Matem\'atica, Universidad T\'ecnica Federico Santa Mar\'ia, Valpara\'iso, Chile.}
\email[A.~Allendes]{alejandro.allendes@usm.cl}

\author[G.~Campa\~na]{Gilberto Campa\~na}
\address[G.~Campa\~na]{Departamento de Ciencias, Universidad T\'ecnica Federico Santa Mar\'ia, Valpara\'iso, Chile.}
\email[G.~Campa\~na]{gilberto.campana@usm.cl}

\author[E.~Ot\'arola]{Enrique Ot\'arola}
\address[E.~Ot\'arola]{Departamento de Matem\'atica, Universidad T\'ecnica Federico Santa Mar\'ia, Valpara\'iso, Chile.}
\email[E.~Ot\'arola]{enrique.otarola@usm.cl}

\author[A.J.~Salgado]{Abner J. Salgado}
\address[A.J.~Salgado]{Department of Mathematics, University of Tennessee, Knoxville, TN 37996, USA.}
\email[A.J.~Salgado]{asalgad1@utk.edu}

\date{Draft version of \today.}

\subjclass{35R06,   
65N12,              
65N15,              
74S05.              
}

\keywords{Linear elasticity, Dirac measures, finite element approximation, inf-sup conditions.}

\begin{abstract}
We study the linear elasticity system subject to singular forces. We show existence and uniqueness of solutions in two frameworks: weighted Sobolev spaces, where the weight belongs to the Muckenhoupt class $A_2$; and standard Sobolev spaces where the integrability index is less than $d/(d-1)$; $d$ is the spatial dimension. We propose a standard finite element scheme and provide optimal error estimates in the $\bL^2$--norm. By proving well posedness, we clarify some issues concerning the study of generalized mixed problems in Banach spaces.
\end{abstract}

\maketitle

\section{Introduction}
\label{sec:intro}

The modeling of heterogeneous and multiscale phenomena often leads to coupled partial differential equations (PDEs) posed in different dimensions \cite{MR4219260,MR4266545,MR4040715}, or to PDEs in a single domain but where the problem data is supported on lower dimensional manifolds \cite{MR3854357,MR3734696,MR4565982,MR4250570,MR3429589,LACOUTURE2015187}. We shall not dwell in the numerous applications that fit this paradigm, but rather confine ourselves to the case of linearized elasticity with singular (in particular Dirac) sources. We refer the reader to \cite{MR4615919} for an interesting list of possible biological applications in this area.

Reference \cite{MR4615919} contains an analysis of the linear elasticity equations under Dirac sources, a numerical scheme, and some numerical illustrations. The approach to the analysis adopted by the authors of \cite{MR4615919} is via weighted Sobolev spaces and aims to extend the results of \cite{MR2888310}, dealing with a Poisson problem, to a linear elasticity system. Unfortunately, as stated in \cite{MR4060457}, the results of \cite{MR2888310} are not correct. As a result, \cite{MR4615919} also has a similar inaccuracy, so its results do not have a solid mathematical foundation.

The purpose of this paper is threefold. First, we explain the source of the inaccuracy in \cite{MR2888310,MR4615919}, in the hope that the community will avoid this pitfall in the future. Next, we provide two possible solutions, \ie, we show the well posedness of the elasticity system under singular, in particular point, forces in two functional frameworks. The first consists of weighted Sobolev spaces $\bH^1(\varpi,\Omega)$, where the weight $\varpi$ belongs to the Muckenhoupt class $A_2$; see Section~\ref{sec:Notation} for notation. The second consists of standard Sobolev spaces $\bW^{1,p}(\Omega)$, but with an integrability index $p$ smaller than $d/(d-1)$. Here, $d \in \{2, 3\}$ denotes the spatial dimension. Finally, we propose a numerical scheme consisting of piecewise linear $\bH^1(\Omega)$--conforming finite elements and, inspired by \cite{MR812624} and \cite[Section 3]{MR4408483}, provide \emph{optimal} error estimates in $\bL^2(\Omega)$.

To achieve the three goals stated above, we organize our exposition as follows. In section~\ref{sec:Notation}, we establish the notation and framework in which we will work. In section~\ref{sec:Mistake}, we discuss generalized saddle point problems in Banach spaces and show where the inaccuracy in \cite{MR2888310} and \cite{MR4615919} lies. To correct the analysis of the model, we explain two possible approaches in section~\ref{sec:Analysis}. The numerical method we propose and analyze is described in detail in section~\ref{sec:Numerics}.

\section{Notation}
\label{sec:Notation}
Let us establish the notation, the framework we will work with, and our main assumptions. Throughout our work, the spatial dimension is $d \in \{2,3\}$. The domain of interest is $\Omega \subset \xR^d$, which we assume is bounded and has at least a Lipschitz boundary $\partial\Omega$. We will use the standard notation and conventions for Lebesgue and Sobolev spaces and their norms. Many of the quantities we will deal with are tensor-valued. We will indicate this by a bold typeface. In addition, a bold script Latin letter indicates a rank-one tensor (vector), while a bold capital Latin or a bold Greek letter indicates a rank-two tensor (matrix). For $f \in L^1(E)$ with $|E| > 0$ we denote the mean value of $f$ over $E$ by $\langle f \rangle_E  = \fint_{E} f(x) \diff x:= |E|^{-1} \int_E f (x) \diff x$.

To describe the problem in which we are interested, we first provide some notation and terminology. Given a sufficiently smooth vector field $\bw : \Omega \to \xR^d$, let its symmetric gradient or its linearized strain be defined as
\[
  \beps(\bw) = \frac12 \left( \GRAD \bw + \GRAD \bw^\intercal \right).
\]
Note that by construction this is a symmetric tensor. The elastic stress tensor is then 
defined by
\[
  \bsigma(\bw) = 2\mu\beps(\bw) +\lambda \tr\beps(\bw) \bI.
\]
Here $\bI \in\xR^{d \times d}$ is the identity tensor and $\lambda$ and $\mu$ are the so-called Lam\'e constants, which we assume to be positive.

The problem that interests us in this paper is as follows:
Given a forcing term $\bef: \Omega \to \xR^d$, we must find a displacement field $\bu : \bar\Omega \to \xR^d$, which satisfies
\begin{equation}
  \label{eq:themodel}
    -\DIV \bsigma(\bu) = \bef \text{ in } \Omega, \qquad \qquad
    \bu = 0 \text{ on } \partial\Omega.
\end{equation}
Of particular interest are the cases where the forcing term is singular. This is especially the case when
$\bef$ is measure valued. Another specialization, such as the one studied in \cite{MR4615919}, is when $\bef$  is a collection of point forces: Let $K \in \xN$, $\{\point_k\}_{k=1}^K \subset \Omega$, and $\{\bef_k \}_{k=1}^K \subset \xR^d$. Then,
\begin{equation}
\label{eq:FisSumOfDeltas}
  \bef = \sum_{k=1}^K \bef_k \delta_{\point_k},
\end{equation}
where $\delta_\point$ is the Dirac distribution supported at $\point$.

Before proceeding, we recall an equivalent formulation of $\DIV \bsigma(\bw)$, for any vector field $\bw : \Omega \to \xR^d$,
which allows us to provide an analysis for problem \eqref{eq:themodel}. Namely,
\begin{equation}
\label{eq:Transform}
  \DIV \bsigma(\bw) = \mu \Delta \bw + (\mu + \lambda) \GRAD \DIV \bw.
\end{equation}

\subsection{Muckenhoupt weighted Sobolev spaces and their properties}

We could summarize the general theory of Muckenhoupt weighted Sobolev spaces, but we will refrain from unnecessary generality. A weight is a locally integrable and nonnegative function defined on $\mathbb{R}^d$. We will say that a weight $\omega$ belongs to the Muckenhoupt class $A_2$ if \cite{MR1800316,MR1774162,MR2491902}:
\begin{equation}
  [\omega]_{A_2} = \sup_{B} \fint_B \omega \diff x \fint_B \omega^{-1} \diff x < \infty,
\label{eq:weight}
\end{equation}
where the supremum is taken over all balls $B \subset \xR^d$. We call $[\omega]_{A_2}$ the Muckenhoupt characteristic of $\omega$. Note that $\omega\in A_2$ if and only if $\omega^{-1} \in A_2$ and they have the same Muckenhoupt characteristic. We define the weighted Lebesgue space $L^2(\omega,\Omega)$ as the Lebesgue space for the measure $\omega \diff x$. Since for $\omega \in A_2$, $L^2(\omega,\Omega) \subset L^1(\Omega)$, the elements of $L^2(\omega,\Omega)$ are distributions. In particular, we can speak of weighted Sobolev spaces \cite[Definition 2.1.1]{MR2491902}: 
\begin{align*}
  H^1(\omega,\Omega) &= \left\{ w \in L^2(\omega,\Omega) : \GRAD w \in \bL^2(\omega,\Omega) \right\}, \\
  \| w \|_{H^1(\omega,\Omega)} &= \left( \| w\|_{L^2(\omega,\Omega)}^2 + \| \GRAD w \|_{\bL^2(\omega,\Omega)}^2 \right)^{{\frac{1}{2}}}.
\end{align*}
This space is Hilbert. We let $H^1_0(\omega,\Omega)$ be the closure of $C_0^\infty(\Omega)$ in $H^1(\omega,\Omega)$ and note that due to the fact that $\omega \in A_2$ a Poincar\'e--type inequality holds, \ie, there exists a constant $C_P>0$ such that \cite[Theorem 1.3]{MR0643158}
\[
  \| w \|_{L^2(\omega,\Omega)} \leq C_P \| \GRAD w \|_{\bL^2(\omega,\Omega)} \quad \forall w \in H^1_0(\omega,\Omega).
\]
Consequently, $\| w \|_{H^1_0(\omega,\Omega)} = \| \GRAD w \|_{\bL^2(\omega,\Omega)}$ is an equivalent norm on $H^1_0(\omega,\Omega)$.

To conclude the discussion of weighted Sobolev spaces, we note that the following Korn--type inequalities hold \cite[Theorems 5.15 and 5.17]{MR2643399}: If $\omega \in A_2$, then there is a constant $C>0$ such that
\begin{align*}
  \| \GRAD \bw \|_{\bL^2(\omega,\Omega)} 
  & \leq C \| \beps(\bw) \|_{\bL^2(\omega,\Omega)} 
  & 
  \forall \bw &\in \bH^1_0(\omega,\Omega), 
  \\
  \| \GRAD \bw - \langle \GRAD \bw \rangle_\Omega \|_{\bL^2(\omega,\Omega)} 
  &\leq C \| \beps(\bw) - \langle \beps(\bw) \rangle_\Omega \|_{\bL^2(\omega,\Omega)} 
  &  \forall \bw &\in \bH^1(\omega,\Omega), 
  \\
  \| \GRAD \bw \|_{\bL^2(\omega,\Omega)} &\leq C  \| \beps(\bw) - \langle \beps(\bw) \rangle_\Omega \|_{\bL^2(\omega,\Omega)} \\ &+ C\diam(\Omega)^{-1} \|  \bw - \langle \bw \rangle_\Omega \|_{\bL^2(\omega,\Omega)}
  & \forall \bw &\in \bH^1(\omega,\Omega).
\end{align*}

\begin{rmrk}[other variants of Korn's inequality]
From the Korn--type inequalities presented above, it is easy to derive others. In particular, the weighted Korn's inequality that \cite{MR4615919} states as Conjecture 3.1 is valid. Since it is not needed further in our discussion, we do not elaborate on it.
\end{rmrk}

Powers of distances to lower dimensional objects are prototypical examples of Muckenhoupt weights. In particular, if we define
\begin{equation}
\label{eq:DefofMagicWeight}
  \varrho_\alpha(x) = \max_{{k \in \{ 1, \dots, K\}} } |x-\point_k|^\alpha, \qquad \alpha \in (-d,d),
\end{equation}
we have that $\varrho_\alpha \in A_2$; see \cite{MR3215609,MR1601373}.

The main reason to bring Muckenhoupt classes into our discussion is that certain Radon measures are in the dual of some weighted Sobolev spaces.

\begin{prpstn}[singular sources]
\label{prop:SingSources}
Let $\omega \in A_2$ and let $\mu \in M(\Omega)$ be a compactly supported Radon measure. If there is $r>0$ for which
\[
  \int_\Omega \int_0^r \frac{t^2 \mu( B(x,t) ) }{ \omega( B(x,t) ) } \frac{\diff t}t \diff \mu(x) < \infty,
\]
where $B(x,t)$ is the ball of center $x$ and radius $t$,
\[
  \omega(B(x,t)) = \int_{ B(x,t)} \omega(y) \diff y, \qquad 
  \mu(B(x,t)) = \int_{ B(x,t)} \diff \mu(y).
\]
Then, $\mu \in \left( H^1_0(\omega,\Omega) \right)'$. In particular, if $\alpha \in (d-2,d)$, then $\bef$, defined in \eqref{eq:FisSumOfDeltas}, belongs to $\left(\bH^1_0(\varrho_{-\alpha},\Omega) \right)'$, where $\varrho_\alpha$ is defined in \eqref{eq:DefofMagicWeight}.
\end{prpstn}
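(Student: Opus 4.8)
The plan is to establish the defining dual estimate
\[
  \left| \int_\Omega w \diff \mu \right| \leq C \, \| \GRAD w \|_{\bL^2(\omega,\Omega)} \qquad \forall w \in C_0^\infty(\Omega),
\]
and then invoke the density of $C_0^\infty(\Omega)$ in $H^1_0(\omega,\Omega)$ together with the equivalence $\| w \|_{H^1_0(\omega,\Omega)} = \| \GRAD w \|_{\bL^2(\omega,\Omega)}$ to conclude $\mu \in (H^1_0(\omega,\Omega))'$. First I would use the pointwise representation of a smooth compactly supported function through the Newtonian kernel, $|w(x)| \leq c_d \int_\Omega |\GRAD w(y)|\,|x-y|^{1-d} \diff y =: c_d (I_1|\GRAD w|)(x)$, integrate against $\diff\mu$, and exchange the order of integration to obtain
\[
  \int_\Omega |w| \diff\mu \leq c_d \int_\Omega |\GRAD w(y)| \,(I_1\mu)(y) \diff y, \qquad (I_1\mu)(y) = \int_\Omega \frac{\diff\mu(x)}{|x-y|^{d-1}}.
\]
A weighted Cauchy--Schwarz inequality, writing the integrand as $(|\GRAD w|\,\omega^{1/2})(I_1\mu\,\omega^{-1/2})$, then bounds the right-hand side by $c_d \|\GRAD w\|_{\bL^2(\omega,\Omega)} \,\mathcal E^{1/2}$, where $\mathcal E := \int_\Omega (I_1\mu)^2 \omega^{-1} \diff y$. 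Thus the entire statement reduces to proving that the hypothesis forces $\mathcal E < \infty$, with admissible constant $C = c_d \mathcal E^{1/2}$.

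The heart of the argument is the estimate $\mathcal E \lesssim \int_\Omega\int_0^r \frac{t^2\mu(B(x,t))}{\omega(B(x,t))}\frac{\diff t}{t}\diff\mu(x) + \mathrm{(bounded\ tail)}$. I would expand $\mathcal E = \int_\Omega\int_\Omega \mathcal K(x,x') \diff\mu(x)\diff\mu(x')$ with kernel $\mathcal K(x,x') = \int_\Omega \omega^{-1}(y)\,|x-y|^{1-d}|x'-y|^{1-d}\diff y$, decompose the $y$-integral into dyadic annuli around the point closer to $y$, and use the $A_2$ relation $\omega(B)\,\omega^{-1}(B) \leq [\omega]_{A_2}|B|^2$ to replace $\omega^{-1}(B(x,t))$ by $|B(x,t)|^2/\omega(B(x,t)) \approx t^{2d}/\omega(B(x,t))$. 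The doubling property of $A_2$ weights then lets one convert the dyadic sums into the integrals appearing in the hypothesis, while compact support of $\mu$ and $\mu(\Omega) < \infty$ dispose of the range $t \gtrsim \diam\supp\mu$. This comparison is essentially a Muckenhoupt-weighted Hedberg--Wolff inequality, and I expect it to be the main obstacle: the delicate points are the uniformity of the doubling constants across scales and the treatment of the diagonal $x = x'$, which for an atomic measure carries the self-energy $\int_\Omega |x-y|^{2-2d}\omega^{-1}\diff y$ and must be matched against the factor $\mu(B(x,t))$.

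For the concrete claim I would apply the criterion to each Dirac mass $\mu = \delta_{\point_k}$ individually; since $\bef$ is a finite linear combination and it suffices to bound one scalar component, membership in the dual follows by linearity. For $\mu = \delta_{\point_k}$ one has $\mu(B(x,t)) = \mathbf 1[\,|x-\point_k| < t\,]$ and $\diff\mu$ concentrated at $\point_k$, so the hypothesis collapses to $\int_0^r t\,\omega(B(\point_k,t))^{-1}\diff t < \infty$. Because the $\point_k$ are distinct, for $x$ near $\point_k$ the maximum in \eqref{eq:DefofMagicWeight} (with exponent $-\alpha < 0$) is attained at $j = k$, so $\varrho_{-\alpha}(x) \approx |x-\point_k|^{-\alpha}$ and $\varrho_{-\alpha}(B(\point_k,t)) \approx \int_0^t s^{d-1-\alpha}\diff s \approx t^{d-\alpha}$ for small $t$, using $\alpha < d$. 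The criterion then reduces to $\int_0^r t^{1-d+\alpha}\diff t < \infty$, i.e.\ $\alpha > d-2$, which is exactly guaranteed by $\alpha \in (d-2,d)$; the same range also ensures $\varrho_{-\alpha} \in A_2$. Summing over the finitely many indices $k$ and over the $d$ components yields $\bef \in (\bH^1_0(\varrho_{-\alpha},\Omega))'$.
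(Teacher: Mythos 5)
Your opening reduction is correct and complete: the representation $|w(x)|\le c_d\int_\Omega|\GRAD w(y)|\,|x-y|^{1-d}\diff y$ for $w\in C_0^\infty(\Omega)$, Fubini, weighted Cauchy--Schwarz, and density do reduce the whole proposition to the finiteness of $\mathcal{E}=\int_\Omega (I_1\mu)^2\omega^{-1}\diff y$. Note that this is a genuinely different route from the paper, which does not prove the proposition at all: it cites the general statement from Heinonen--Kilpel\"ainen--Martio (Remark 21.19 of that monograph) and the Dirac specialization from Dur\'an--Ot\'arola--Salgado (Proposition 5.2 there). The genuine gap in your proposal is the step you yourself call the heart of the argument: the comparison $\mathcal{E}\lesssim \int_\Omega\int_0^r t^2\mu(B(x,t))\,\omega(B(x,t))^{-1}\,t^{-1}\diff t\diff\mu(x)+\text{(tail)}$ is asserted, not proved, and it is not bookkeeping. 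It is precisely a weighted Hedberg--Wolff inequality for $A_2$ (or $A_\infty$) weights, a known but nontrivial theorem (Muckenhoupt--Wheeden's equivalence $\|I_1\mu\|_{L^2(\omega^{-1})}\approx\|M_1\mu\|_{L^2(\omega^{-1})}$ for the fractional maximal function, or the versions in Turesson's book and in Cascante--Ortega--Verbitsky), whose proofs require good-$\lambda$ or maximal-function arguments that appear nowhere in your sketch. Moreover, the specific plan you outline cannot succeed as stated: a pointwise bound of the kernel $\mathcal{K}(x,x')=\int_\Omega\omega^{-1}(y)|x-y|^{1-d}|x'-y|^{1-d}\diff y$ by truncated Wolff potentials, using only dyadic annuli, the relation $\omega^{-1}(B)\le[\omega]_{A_2}|B|^2/\omega(B)$, and doubling, is \emph{false}. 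Take $\omega(y)=|y-x_0|^{a}$ with $a\in(1,d)$, a legitimate $A_2$ weight: then $\mathcal{K}(x_0,x')=+\infty$ for every $x'$, because $\omega^{-1}(y)|x_0-y|^{1-d}$ fails to be integrable near $x_0$, while $\int_{|x_0-x'|}^{r} t^{2}\omega(B(x_0,t))^{-1}t^{-1}\diff t\approx\int_{|x_0-x'|}^r t^{1-d-a}\diff t<\infty$. So the desired inequality can only hold after integration against $\diff\mu\,\diff\mu$, and extracting it there is exactly the delicate content you would have to supply. (Your worry about the atomic diagonal, by contrast, is harmless: $\int_\Omega\omega^{-1}(y)|x-y|^{2-2d}\diff y\lesssim[\omega]_{A_2}\int_0^{\diam\Omega}t^{2}\omega(B(x,t))^{-1}t^{-1}\diff t$ follows from exactly the $A_2$ trick you describe, with both sides possibly infinite.)

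Two constructive remarks. First, your treatment of the ``in particular'' claim is correct, and in fact it does not need the general criterion at all: for $\mu=\delta_{\point_k}$ and $\omega=\varrho_{-\alpha}$ one can bound $\mathcal{E}$ directly, since $\varrho_{-\alpha}^{-1}(y)=\min_j|y-\point_j|^{\alpha}\le|y-\point_k|^{\alpha}$, so that
\begin{equation*}
  \mathcal{E}\le\int_\Omega|y-\point_k|^{\alpha+2-2d}\diff y\approx\int_0^{\diam\Omega}t^{\alpha+1-d}\diff t<\infty
  \quad\Longleftrightarrow\quad \alpha>d-2,
\end{equation*}
which, combined with your first paragraph, is a complete and self-contained proof of the statement about $\bef$. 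Second, for the general statement about Radon measures you should either cite the weighted Wolff inequality (or, as the paper does, Remark 21.19 of Heinonen--Kilpel\"ainen--Martio) or restructure the proposition so that only the Dirac case is claimed; as written, your proposal proves the special case but leaves the general case resting on an unproved, and genuinely hard, inequality.
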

\begin{proof}
The general statement about Radon measures is from \cite[Remark 21.19]{MR2305115}. The specialization to linear combinations of Dirac deltas can be found in \cite[Proposition 5.2]{MR4081912}.
\end{proof}

Next, we note that since the collection $\{ \point_k \}_{k=1}^K \subset \Omega$ is finite, it is at a positive distance from $\partial\Omega$. Consequently, the weight $\varrho_\alpha$ defined in \eqref{eq:DefofMagicWeight} does not blow up nor degenerate near the boundary regardless of the value of $\alpha$. In fact, it is continuous. This motivates us to introduce the following restricted class of weights following \cite{MR1601373,MR3906341}.

\begin{dfntn}[class $A_2(\Omega)$]
Let $\Omega\subset\mathbb{R}^d$ be a bounded Lipschitz domain. We say that $\varpi\in A_2$ belongs to $A_2(\Omega)$ if there is an open set $\calG \subset\Omega$, and positive constants $\epsilon>0$ and $\varpi_l>0$ such that:
\begin{enumerate}[$\bullet$]
  \item A layer around the boundary $\partial\Omega$ is contained in $\calG$, \ie,
  \[
    \left\{ x\in\Omega : \dist(x,\partial \Omega)<\epsilon \right\} \subset \calG.
  \]
  
  \item The weight is strictly positive and regular in $\calG$, \ie,
  \[
    \varpi \in C(\bar\calG), \qquad \varpi(x) \geq \varpi_l \ \forall x \in \bar\calG.
  \]
\end{enumerate}
\end{dfntn}

We will follow the convention that $\omega$ denotes a weight in the class $A_2$, while $\varpi$ denotes a weight in the restricted class $A_2(\Omega)$.

\section{Generalized mixed problems in Banach spaces}
\label{sec:Mistake}

In this section, for completeness, we recall the theory of well posedness for some generalized mixed formulations \cite{MR0972452}. We do this to explain the problems in the existing literature.

Let $X,Y,M,Q$ be real reflexive Banach spaces and consider the bilinear forms
\[
  \calA : X \times Y \to \xR, \qquad
  \calB : Y \times M \to \xR, \qquad
  \calC : X \times Q \to \xR.  
\]
For any given $\calF \in Y'$ and $\calG \in Q'$, we will focus on the following problem in this section: Find $(u,p) \in X \times M$ such that
\begin{equation}
\label{eq:GenSaddle}
  \begin{dcases}
    \calA(u,v) + \calB(v,p) = \langle \calF, v \rangle & \forall v \in Y, \\
    \calC(u,q) = \langle \calG, q \rangle &\forall q \in Q,
  \end{dcases}
\end{equation}
where $\langle \cdot, \cdot \rangle$ indicates the duality pairing of the corresponding spaces. To simplify the notation we set
\begin{align*}
  K_\calB &= \ker \calB = \left\{ v \in Y : \calB(v,r) = 0  \ \forall r\in M \right\}, \\
  K_\calC &= \ker \calC = \left\{ w \in X : \calC(w,q) = 0 \ \forall q\in Q \right\}.
\end{align*}

The main result regarding problem \eqref{eq:GenSaddle} is as follows.

\begin{thrm}[well posedness]
\label{thm:CanutoMaday}
Assume that $\calA$, $\calB$, and $\calC$ are bounded, $\calF \in Y'$, and $\calG \in Q'$. Problem \eqref{eq:GenSaddle} is well posed if and only if:
\begin{enumerate}[$\bullet$]
  \item There is $\alpha_\calA > 0$ such that
  \begin{equation}
  \label{eq:infsupA1}
    \sup_{v \in K_\calB } \frac{ \calA(w,v) }{ \| v \|_Y } \geq \alpha_\calA \| w \|_X \quad \forall w \in K_\calC.
  \end{equation}
  
  \item For every $v \in K_\calB$,
  \begin{equation}
  \label{eq:infsupA2}
    \forall w \in K_\calC : \ \calA(w,v) = 0 \quad \implies \quad v=0,
  \end{equation}
  
  \item There is $\beta_\calB>0$ such that
  \begin{equation}
  \label{eq:infsupB}
    \sup_{v \in Y} \frac{\calB(v,r) }{\| v \|_Y} \geq \beta_\calB \| r \|_M \quad \forall r \in M.
  \end{equation}

  \item There is $\beta_\calC>0$ such that
  \begin{equation}
  \label{eq:infsupC}
    \sup_{w \in X} \frac{\calC(w,q) }{\| w \|_X} \geq \beta_\calC \| q \|_Q \quad \forall q \in Q.
  \end{equation}
\end{enumerate}
\end{thrm}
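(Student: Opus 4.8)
My plan is to establish the substantive ``if'' direction by peeling off the constraint and the two multipliers one at a time, reducing \eqref{eq:GenSaddle} to a single well-posed problem on a pair of kernel subspaces, and then to obtain the ``only if'' direction by reversing this argument with the closed range and open mapping theorems. It is convenient to identify the bilinear forms with their induced operators: let $\langle A w, v\rangle = \calA(w,v)$, $\langle B v, r\rangle = \calB(v,r)$, $\langle C w, q\rangle = \calC(w,q)$, with $A\colon X\to Y'$, $B\colon Y\to M'$, $C\colon X\to Q'$, and let $B'\colon M\to Y'$ denote the transpose of $B$. Then \eqref{eq:GenSaddle} reads $A u + B' p = \calF$ in $Y'$ together with $C u = \calG$ in $Q'$. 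The key observation is that \eqref{eq:infsupB} and \eqref{eq:infsupC} say precisely that $B'$ and $C'$ are bounded below; since all four spaces are reflexive, the closed range theorem then guarantees that $C$ is surjective and that the range of $B'$ coincides with the annihilator $K_\calB^\circ \subset Y'$ of $K_\calB = \ker B$.

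First I would use \eqref{eq:infsupC} to lift the constraint. As $C$ is surjective there is $u_\calG \in X$ with $C u_\calG = \calG$ and $\| u_\calG \|_X \leq \beta_\calC^{-1} \| \calG \|_{Q'}$. Writing $u = u_\calG + u_0$ with $u_0 \in K_\calC$ satisfies the second equation automatically and turns the first into the problem of finding $(u_0,p) \in K_\calC \times M$ with $\calA(u_0,v) + \calB(v,p) = \langle \hat\calF, v\rangle$ for all $v\in Y$, where $\hat\calF := \calF - A u_\calG \in Y'$. Testing only against $v \in K_\calB$ annihilates the term $\calB(v,p)$, leaving the reduced problem: find $u_0 \in K_\calC$ such that $\calA(u_0,v) = \langle \hat\calF, v\rangle$ for all $v\in K_\calB$. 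Since $K_\calB$ is closed in the reflexive space $Y$, it is itself reflexive, and the generalized Lax--Milgram (Banach--Ne\v{c}as--Babu\v{s}ka) theorem applies on the pair $(K_\calC, K_\calB)$: its two hypotheses are exactly the inf--sup condition \eqref{eq:infsupA1} and the non--degeneracy condition \eqref{eq:infsupA2}. This produces a unique $u_0 \in K_\calC$ with $\| u_0\|_X \leq \alpha_\calA^{-1} \| \hat\calF \|_{(K_\calB)'}$.

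It remains to recover $p$. By the previous step the residual $\hat\calF - A u_0 \in Y'$ vanishes on $K_\calB$, hence lies in $K_\calB^\circ = \mathrm{range}(B')$; therefore there is a unique $p \in M$ with $B' p = \hat\calF - A u_0$, and \eqref{eq:infsupB} yields $\| p \|_M \leq \beta_\calB^{-1} \| \hat\calF - A u_0\|_{Y'}$. This identity holds in all of $Y'$, so $\calA(u_0,v) + \calB(v,p) = \langle \hat\calF, v\rangle$ for every $v\in Y$, and $(u_\calG + u_0, p)$ solves \eqref{eq:GenSaddle}. Chaining the three estimates gives the a priori bound $\| u \|_X + \| p \|_M \lesssim \| \calF \|_{Y'} + \| \calG \|_{Q'}$, which delivers existence, uniqueness, and stability, \ie, well posedness. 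For the converse I would specialize the data and invoke the open mapping theorem: solvability of the $p$--component forces \eqref{eq:infsupB}, solvability of the constraint forces \eqref{eq:infsupC}, and unique stable solvability of the reduced $u_0$--problem forces \eqref{eq:infsupA1} and \eqref{eq:infsupA2}.

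I expect the main obstacle to be the bookkeeping around the two closed range arguments, namely the identity $\mathrm{range}(B') = K_\calB^\circ$ and the surjectivity of $C$. These are precisely the places where reflexivity of all four spaces is indispensable and where the comfortable Hilbert--space picture (orthogonal complements and Riesz representation) must be replaced by annihilators and duality pairings. A secondary point requiring care is confirming that the hypotheses of the generalized Lax--Milgram theorem are imposed on the correct trial/test pair $(K_\calC, K_\calB)$ and that these subspaces inherit reflexivity from $X$ and $Y$; getting the two spaces the wrong way round would misidentify which of \eqref{eq:infsupA1}, \eqref{eq:infsupA2} plays the role of stability and which plays the role of density.
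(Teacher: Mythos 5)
Your argument is correct, but note that the paper does not actually prove this theorem: its ``proof'' consists of pointers to \cite[Section 2.1]{MR0972452} and \cite[Exercise 2.14]{Guermond-Ern}. What you supply is, in effect, a self-contained reconstruction of the argument contained in those references: the operator form $Au+B'p=\calF$, $Cu=\calG$; the closed-range/reflexivity step turning \eqref{eq:infsupC} into surjectivity of $C$ and \eqref{eq:infsupB} into the identity $\mathrm{range}(B')=K_\calB^\circ$; the lifting $u=u_\calG+u_0$; the Banach--Ne\v{c}as--Babu\v{s}ka theorem on the pair $(K_\calC,K_\calB)$, whose two hypotheses are exactly \eqref{eq:infsupA1} and \eqref{eq:infsupA2}, with reflexivity of the test space $K_\calB$ inherited from $Y$; and recovery of the multiplier from the annihilator identity. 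Your forward direction is complete up to routine details (one cosmetic point: in a Banach space the infimum over the coset $u_\calG+K_\calC$ need not be attained, so the stability constant for $u_\calG$ carries a harmless factor $1+\epsilon$). The converse, which you only sketch, does work as you indicate, but the order of the steps matters and deserves to be made explicit: one first obtains \eqref{eq:infsupC} from surjectivity of $C$, and \eqref{eq:infsupB} from the observation that the data $(B'r,0)$ has the unique solution $(0,r)$, combined with the open mapping theorem applied to the solution operator; only after \eqref{eq:infsupB} is in hand is $\mathrm{range}(B')=K_\calB^\circ$ available, and this is what lets one pass from well posedness of \eqref{eq:GenSaddle} to bijectivity of the reduced operator from $K_\calC$ to $(K_\calB)'$ (injectivity by eliminating the multiplier, surjectivity via a Hahn--Banach extension of functionals on $K_\calB$ to $Y$), whence \eqref{eq:infsupA1} and \eqref{eq:infsupA2} follow from the open mapping theorem and reflexivity of $K_\calB$.
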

\begin{proof}
  See \cite[Section 2.1]{MR0972452} and \cite[Exercise 2.14]{Guermond-Ern}.
\end{proof}

\begin{rmrk}[extensions and variations]
While Theorem~\ref{thm:CanutoMaday} provides necessary and sufficient conditions for the existence of solutions to \eqref{eq:GenSaddle}, the problem itself can be generalized. See, for instance, \cite[Section 49.4]{MR4269305} for a generalization to nonreflexive spaces and \cite{MR3167999} for extensions to the case where the spaces are locally convex topological vector spaces.
\end{rmrk}

\subsection{The elasticity equation with singular sources}
Let us now discuss the problem with the proofs of \cite{MR4615919}. We show here that Lemma 3.2 in that reference unfortunately has a flaw. For this reason, the results that depend on this lemma, in particular Theorems 3.3 and 3.4 in \cite{MR4615919}, do not have a solid mathematical foundation.

Our goal in this work is not only to correct the aforementioned oversight, but we also extend the results of \cite{MR4615919}. First,  our arguments work for $d \in \{2,3\}$, while \cite{MR4615919} considers only two dimensions; see \cite[page 2356]{MR4615919}. Second, we provide a numerical scheme and an error analysis for it. The authors of \cite{MR4615919} do not derive error estimates.

Let us begin by reconciling the notation between \cite[Lemma 3.2]{MR4615919} and that used here. First, without loss of generality, \cite{MR4615919} sets $K =1$, $\point_1 = \mathtt{0}$, and considers $\varrho_\alpha = r^{2s}$ with $s \in (-1,1)$; where $r = |x|$ is the Euclidean distance from $x$ to the origin \cite[definition (3.3)]{MR4615919}. We note that in this particular case \cite[definition (3.3)]{MR4615919} can be \emph{extended} to $\varrho_\alpha = r^{ds}$, where $s \in (-1,1)$. Then, $\bH^1_s(\Omega) \longleftrightarrow \bH^1_0(\varrho_\alpha,\Omega)$ and
\[
  \polL^2_s(\Omega) = \left\{ \Sigma: \Omega \rightarrow \mathbb{R}^{d \times d}: \| \Sigma \|^2_{\polL^2_s(\Omega)} = \int_{\Omega} r^{ds} \| \Sigma \|^2 \diff x < \infty \right \} \longleftrightarrow  \bL^2(\varrho_\alpha,\Omega),
\]
where $\| \cdot \|$ denotes the Frobenius norm. Notice that, given the range of exponents, we have $\varrho_\alpha \in A_2$.

Let us now examine the proof of Lemma 3.2 in \cite{MR4615919}. The core of the problem lies in the step called \emph{Inf-sup of $a$} on page 2356, where the bilinear form $a: \polL^2_s(\Omega)\times \polL^2_{-s}(\Omega)\rightarrow \mathbb{R}$ is defined by $a(\Sigma, \tilde{\Sigma}) = \int_{\Omega} \Sigma : \tilde{\Sigma} \diff x$. Indeed, in the context of this lemma \eqref{eq:infsupA1} has to be proved where the supremum must be taken over
\[
  K_\calB = \left\{ \widetilde{\Sigma} \in \polL^2_{-s}(\Omega): b_1(\bz,\widetilde{\Sigma}) = 0 \ \forall \bz \in \bH^1_{s}(\Omega) \right\}, 
  \qquad 
  b_1(\bz,\widetilde{\Sigma}) = \int_\Omega \beps(\bz):\widetilde{\Sigma} \diff x.
\]
\ie, the supremum must be taken \emph{over the kernel} of $b_1$ and not over the whole space $\polL^2_{-s}(\Omega)$. Note that, for $\Sigma \in \polL^2_s(\Omega)$, $\widetilde{\Sigma} = r^{ds} \Sigma \in \polL^2_{-s}(\Omega) \setminus K_\calB$.
Following the same logic, the step tagged \emph{Vice versa} also has a gap.

\section{Analysis of the problem}
\label{sec:Analysis}

In this section we provide an analysis for problem \eqref{eq:themodel}. We will do this in two functional frameworks. Let $d \in \{2,3\}$, $\Omega \subset \xR^d$ be a bounded and Lipschitz domain, and let $\bX,\bY$ be spaces of vector valued functions on $\Omega$, which we assume to be Banach and reflexive. Given $\bef \in \bY'$, we seek for $\bu \in \bX$ that satisfies
\begin{equation}
\label{eq:ElastAbstract}
  \mu \int_\Omega \GRAD \bu : \GRAD \bv \diff x + (\mu + \lambda)\int_\Omega \DIV \bu \DIV \bv \diff x = \langle \bef, \bv \rangle \quad \forall \bv \in \bY.
\end{equation}
We note that we used \eqref{eq:Transform} to obtain the weak formulation.

\subsection{The elasticity equations with singular sources: Analysis in weighted spaces}
\label{sub:AnalysisWeighted}

The analysis on weighted spaces is the content of the next result.

\begin{thrm}[weighted spaces]
\label{thm:WeightedSpaces}
Let $d \in \{2,3\}$, $\Omega \subset \xR^d$ be a bounded and Lipschitz domain, and let $\varpi \in A_2$. Set $\bX = \bH^1_0(\varpi,\Omega)$ and $\bY = \bH^1_0(\varpi^{-1},\Omega)$. Assume that, either,
\begin{enumerate}[$\bullet$]
  \item $\Omega$ is a convex polyhedron, or
  \item $\varpi \in A_2(\Omega)$.
\end{enumerate}
Then, for every $\bef \in (\bH^1_0(\varpi^{-1},\Omega))'$ problem \eqref{eq:ElastAbstract} has a unique solution $\bu \in \bH^1_0(\varpi,\Omega)$, which satisfies the bound
\[
  \| \GRAD \bu \|_{\bL^2(\varpi,\Omega)} \leq C \|\bef \|_{(\bH^1_0(\varpi^{-1},\Omega))'},
  \qquad
  C>0.
\]
\end{thrm}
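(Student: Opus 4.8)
The plan is to recognize \eqref{eq:ElastAbstract} as a \emph{single} linear equation between the reflexive Banach spaces $\bX = \bH^1_0(\varpi,\Omega)$ and $\bY = \bH^1_0(\varpi^{-1},\Omega)$ and to invoke Theorem~\ref{thm:CanutoMaday} in the degenerate configuration $M = Q = \{0\}$, $\calB \equiv \calC \equiv 0$. With this choice the constraints are void, $K_\calB = \bY$ and $K_\calC = \bX$, conditions \eqref{eq:infsupB} and \eqref{eq:infsupC} hold trivially, and the theorem reduces to the Banach--Ne\v{c}as--Babu\v{s}ka conditions: the inf--sup bound \eqref{eq:infsupA1} and the nondegeneracy \eqref{eq:infsupA2} for the symmetric form
\[
  a(\bu,\bv) = \mu \int_\Omega \GRAD \bu : \GRAD \bv \diff x + (\mu+\lambda)\int_\Omega \DIV\bu\,\DIV\bv \diff x .
\]
Boundedness of $a$ on $\bX\times\bY$ is immediate from the factorization $1 = \varpi^{1/2}\varpi^{-1/2}$ and the Cauchy--Schwarz inequality, together with $|\DIV\bw|\le \sqrt d\,|\GRAD\bw|$. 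Since $a$ is symmetric and, by the identity $[\varpi^{-1}]_{A_2}=[\varpi]_{A_2}$, the weight $\varpi^{-1}$ is again in $A_2$ (and in $A_2(\Omega)$ whenever $\varpi$ is, as $\varpi$ is continuous and bounded on the compact set $\bar\calG$), condition \eqref{eq:infsupA2} is exactly the inf--sup bound for the pair $(\bY,\bX)$ with the roles of $\varpi$ and $\varpi^{-1}$ interchanged. Hence it suffices to prove \eqref{eq:infsupA1}.

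I would next translate \eqref{eq:infsupA1} into a \emph{weighted a priori estimate}. Writing $A\colon\bX\to\bY'$ for the operator induced by $a$, the bound \eqref{eq:infsupA1} says precisely that $A$ is bounded below, i.e.
\[
  \|\GRAD\bu\|_{\bL^2(\varpi,\Omega)} \le C\,\|A\bu\|_{\bY'}
  = C\,\sup_{\bv\in\bY}\frac{a(\bu,\bv)}{\|\GRAD\bv\|_{\bL^2(\varpi^{-1},\Omega)}}\qquad\forall\,\bu\in\bX .
\]
To exploit this I would first represent the datum: because $\GRAD$ is an isometry of $\bH^1_0(\varpi^{-1},\Omega)$ onto a closed subspace of $\bL^2(\varpi^{-1},\Omega)$ (Poincar\'e) and $\bL^2(\varpi,\Omega)$ is the dual of $\bL^2(\varpi^{-1},\Omega)$ under the unweighted pairing, any $\bef\in\bY'$ admits, by Hahn--Banach, a field $\bF\in\bL^2(\varpi,\Omega)$ with $\langle\bef,\bv\rangle=\int_\Omega\bF:\GRAD\bv\diff x$ and $\|\bF\|_{\bL^2(\varpi,\Omega)}=\|\bef\|_{\bY'}$, so that \eqref{eq:ElastAbstract} becomes $-\DIV\bsigma(\bu)=-\DIV\bF$. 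The estimate to be proven is then the $\bL^2(\varpi,\Omega)$--boundedness of the solution map $\bF\mapsto\GRAD\bu$. Using \eqref{eq:Transform} and the weighted Korn inequalities recorded in Section~\ref{sec:Notation}, the system can be handled through the vector Laplacian, for which $\bF\mapsto\GRAD\bu$ is a matrix of singular integral operators built from derivatives of the elasticity Green tensor.

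The heart of the argument, and the step I expect to be the main obstacle, is the global $\bL^2(\varpi,\Omega)$ bound for this solution map, since this is exactly where the two hypotheses enter and where the flaw identified in Section~\ref{sec:Mistake} occurs. On $\xR^d$ the map is a zero--order Calder\'on--Zygmund operator, hence bounded on $\bL^2(\varpi,\Omega)$ for every $\varpi\in A_2$ by the Hunt--Muckenhoupt--Wheeden theory; the difficulty is the contribution of $\partial\Omega$. When $\varpi\in A_2(\Omega)$ the weight neither degenerates nor blows up in the boundary layer $\calG$, so a partition of unity splits $\bu$ into an interior part concentrated near the points $\{\point_k\}$, controlled by the interior weighted Calder\'on--Zygmund estimate with no boundary interaction, and a part supported in $\calG$, where $\varpi\simeq\varpi^{-1}\simeq 1$ and the classical unweighted $\bH^1_0$ energy estimate applies up to the constant $\varpi_l$. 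When $\Omega$ is a convex polyhedron the weight may degenerate at $\partial\Omega$, and instead I would invoke the $\bH^2$--regularity of the Lam\'e system on convex domains to control the solution operator up to the boundary and then pass to the full $A_2$--weighted bound via the associated Calder\'on--Zygmund estimate and Rubio de Francia extrapolation. In either case one obtains $\|\GRAD\bu\|_{\bL^2(\varpi,\Omega)}\le C\|\bF\|_{\bL^2(\varpi,\Omega)}=C\|\bef\|_{\bY'}$, which is \eqref{eq:infsupA1}; the symmetric argument yields \eqref{eq:infsupA2}, and Theorem~\ref{thm:CanutoMaday} then delivers existence, uniqueness, and the stated stability bound.
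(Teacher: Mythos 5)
Your reduction of the theorem to the two inf--sup conditions (via Theorem~\ref{thm:CanutoMaday} with $M=Q=\{0\}$, the symmetry of the form, and $[\varpi^{-1}]_{A_2}=[\varpi]_{A_2}$), and your reformulation of \eqref{eq:infsupA1} as the weighted a priori bound $\|\GRAD\bu\|_{\bL^2(\varpi,\Omega)}\le C\|\bF\|_{\bL^2(\varpi,\Omega)}$ after representing the datum as $\bef=-\DIV\bF$, are sound and consistent with how this problem is treated in the literature the paper builds on. The genuine gap is in the convex polytope case, which is the load-bearing step (the paper's Lipschitz case is explicitly reduced to it). You propose to ``invoke the $\bH^2$--regularity of the Lam\'e system on convex domains \ldots and then pass to the full $A_2$--weighted bound via the associated Calder\'on--Zygmund estimate and Rubio de Francia extrapolation.'' Extrapolation runs in the wrong direction for this: Rubio de Francia's theorem takes as \emph{input} a family of weighted estimates (boundedness on $L^{p_0}(\omega)$ for \emph{all} $\omega\in A_{p_0}$, with constant controlled by $[\omega]_{A_{p_0}}$) and transfers it across integrability exponents; it cannot manufacture weighted bounds from a single unweighted estimate such as $\bH^2$ regularity or the energy estimate. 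What is actually needed to run Calder\'on--Zygmund theory for the solution operator of a \emph{boundary value problem} is pointwise control of its kernel, i.e.\ H\"older-type estimates for the Green matrix of the Lam\'e system on convex polyhedra: $|\partial_x\partial_y G(x,y)|\lesssim|x-y|^{-d}$ together with H\"older continuity. These are the deep inputs the paper cites (\cite{MR2495783} for $d=3$, and the two-dimensional estimates of \cite[Lemma 2.1]{MR4060457} extended to the Lam\'e Green matrix), which, combined with $\bW^{1,p}$ well-posedness for all $p\in(1,\infty)$ \cite{MR2641539}, feed the machinery of \cite[Section 4]{MR4408483}. Your whole-space Hunt--Muckenhoupt--Wheeden observation handles only the convolution with the Kelvin fundamental solution; the boundary correction is exactly the part for which no substitute for the Green matrix estimates appears in your argument.

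A secondary, smaller issue concerns your Lipschitz case. The paper follows \cite[Theorem 5]{MR3906341}: unweighted theory near $\partial\Omega$ (where $\varpi\in A_2(\Omega)$ is bounded above and below), the \emph{convex polytope weighted result} for the interior piece, and then a gluing argument. Your sketch replaces the interior ingredient by a direct whole-space Calder\'on--Zygmund bound for the cut-off function $\chi\bu$; this can in principle be made to work, but as written it is circular: the right-hand side of the equation satisfied by $\chi\bu$ contains commutator terms supported in the overlap region involving $\bu$ and $\GRAD\bu$ themselves, and the ``classical unweighted energy estimate'' you invoke for the boundary piece likewise produces overlap terms depending on $\bu$. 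Breaking this circularity (by a Fredholm/compactness argument or an iteration, as in \cite{MR3906341}) is precisely the content of the ``localization and gluing'' step, and it needs to be carried out, not just asserted. In summary: the functional-analytic frame of your proposal matches the paper, but the core analytic ingredient for the convex case is misidentified, and the extrapolation step as stated fails.
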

\begin{proof}
The proof repeats arguments that are already available in the literature. For brevity, only the main premises and steps are outlined here.

We consider each case separately, since the second case depends on the first.
\begin{enumerate}[$\bullet$]
  \item $\Omega$ \emph{is a convex polytope:} We first note that problem \eqref{eq:ElastAbstract} is well posed in $\bW^{1,p}(\Omega)$ for all $p \in (1,\infty)$; see \cite[Theorem 4.3.3]{MR2641539} on page 155 and Theorem \ref{thm:pSpaces} below. On the other hand, suitable Green's matrix estimates can be found in \cite[Theorem 5.11]{MR2641539} on page 174 and \cite[Remark 2]{MR2495783} for $d=3$. In two dimensions the above estimates, but for the Green function related to the Poisson problem, were recently derived in \cite[Lemma 2.1]{MR4060457}. As mentioned in \cite[Remark 2]{MR2495783}, these estimates can also be extended to the Green matrix of the Dirichlet problem of the Lam\'e system. Thus, based on these two ingredients, we can proceed as in \cite[Section 4]{MR4408483} to obtain the well posedness of \eqref{eq:ElastAbstract} in weighted spaces.
  
  \item $\Omega$ \emph{is a Lipschitz domain:} The case of a Lipschitz domain and $\varpi \in A_2(\Omega)$ follows the steps of the proof of \cite[Theorem 5]{MR3906341}:
\begin{enumerate}[$\circ$]
  \item An unweighted result on general domains: The proof of well posedness for $\varpi \equiv 1$ is standard and makes no requirements on the domain.
  
  \item A weighted result on smooth domains: By assuming that this subdomain is a convex polytope this is the first case.
  
  \item With the previous two results at hand the proof proceeds by localization and gluing of solutions. As mentioned earlier, the steps repeat those of \cite[Theorem 5]{MR3906341}. \qedhere
\end{enumerate}
\end{enumerate}
\end{proof}

As a consequence, we can provide the well posedness of the problem with point forces.

\begin{crllr}[point forces]
Let $d \in \{2,3\}$, $\Omega \subset \xR^d$ be a bounded Lipschitz domain, and $\alpha \in (d-2,d)$. Then, problem \eqref{eq:ElastAbstract} with $\bef$ given as in \eqref{eq:FisSumOfDeltas} has a unique solution $\mathbf{u}$ in $\bH^1_0(\varrho_\alpha,\Omega)$.
\end{crllr}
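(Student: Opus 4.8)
The plan is to obtain the corollary as a direct specialization of Theorem~\ref{thm:WeightedSpaces}, feeding it the weight $\varpi = \varrho_\alpha$ of \eqref{eq:DefofMagicWeight} and the datum $\bef$ of \eqref{eq:FisSumOfDeltas}. Concretely, I would set $\bX = \bH^1_0(\varrho_\alpha,\Omega)$ and $\bY = \bH^1_0(\varrho_\alpha^{-1},\Omega)$, and then check the two hypotheses of that theorem: that the weight is admissible, and that the right-hand side is a bounded linear functional on $\bY$. Once both are verified, existence, uniqueness, and the stability estimate for \eqref{eq:ElastAbstract} are immediate, and it only remains to read off that the solution lives in $\bH^1_0(\varrho_\alpha,\Omega)$.

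For the admissibility of the weight I would first recall, as noted after \eqref{eq:DefofMagicWeight}, that $\varrho_\alpha \in A_2$ for every $\alpha \in (-d,d)$; since $d \in \{2,3\}$ we have $(d-2,d) \subset (-d,d)$, so $\varrho_\alpha \in A_2$ throughout the stated range. Because $\Omega$ is merely Lipschitz I cannot use the convex-polytope branch of Theorem~\ref{thm:WeightedSpaces} and must instead certify $\varrho_\alpha \in A_2(\Omega)$. This is exactly the observation preceding the definition of the class $A_2(\Omega)$: the finite set $\{\point_k\}_{k=1}^K$ lies in the interior of $\Omega$, so $\delta_0 := \dist(\{\point_k\}_{k=1}^K,\partial\Omega) > 0$, and on the boundary layer $\calG = \{x \in \Omega : \dist(x,\partial\Omega) < \delta_0/2\}$ the weight $\varrho_\alpha$ is continuous and bounded below by a positive constant. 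Hence $\varrho_\alpha \in A_2(\Omega)$ and the Lipschitz branch applies.

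It remains to show $\bef \in (\bH^1_0(\varrho_\alpha^{-1},\Omega))' = \bY'$. Here I would invoke Proposition~\ref{prop:SingSources}, which for $\alpha \in (d-2,d)$ yields precisely $\bef \in (\bH^1_0(\varrho_{-\alpha},\Omega))'$; identifying the test-space weight $\varrho_\alpha^{-1}$ with $\varrho_{-\alpha}$ then places $\bef$ in $\bY'$, and Theorem~\ref{thm:WeightedSpaces} closes the argument. The only step requiring genuine care is this weight matching: one must ensure that the weight on the data side is exactly the reciprocal of the solution-side weight in a neighborhood of each $\point_k$. For a single source ($K=1$) the identity is transparent. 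For $K>1$ the cleanest route, and the one I would take to sidestep any ambiguity in \eqref{eq:DefofMagicWeight}, is to use linearity: solve $-\DIV\bsigma(\bu_k) = \bef_k \delta_{\point_k}$ separately with the single-point weight $|x-\point_k|^\alpha$ and then superpose. I do not expect a substantive obstacle at any stage, since all the analytic difficulty, namely the Green's-matrix estimates together with the localization and gluing procedure, has already been absorbed into Theorem~\ref{thm:WeightedSpaces}.
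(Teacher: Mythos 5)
Your proposal takes essentially the same route as the paper, whose entire proof is exactly your outline: recall $\varrho_\alpha \in A_2(\Omega)$, invoke Proposition~\ref{prop:SingSources} for the datum, and apply Theorem~\ref{thm:WeightedSpaces}. The additional care you devote to matching $\varrho_\alpha^{-1}$ with $\varrho_{-\alpha}$ (transparent for $K=1$, handled by superposition of single-point problems for $K>1$) does not appear in the paper's one-line proof, and it is in fact warranted: with the literal \emph{max} in \eqref{eq:DefofMagicWeight} the weights $\varrho_\alpha^{-1}$ and $\varrho_{-\alpha}$ are not comparable near each $\point_k$ when $K>1$, so your remark patches a detail the paper silently elides.
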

\begin{proof}
It suffices to recall that $\varrho_\alpha \in A_2(\Omega)$, use Proposition~\ref{prop:SingSources}, and Theorem~\ref{thm:WeightedSpaces}.
\end{proof}

\subsection{The elasticity equations with singular sources: Analysis in Banach spaces}
\label{sub:AnalysisBanach}

Let us now collect known results about the elasticity system in Sobolev spaces with integrability index $p \neq 2$. The analysis of this problem is as follows.

\begin{thrm}[Banach spaces]
\label{thm:pSpaces}
Let $d \in \{2,3\}$ and $\Omega \subset \xR^d$ be a bounded and Lipschitz domain. Set $\bX = \bW^{1,p}_0(\Omega)$ and $\bY = \bW^{1,q}_0(\Omega)$, where $p$ and $q$ are H\"older conjugate, \ie, $p^{-1} + q^{-1} = 1$. Assume that either
\begin{enumerate}[$\bullet$]
  \item $\Omega$ is Lipschitz and $p \in \left(\tfrac{2d}{d+1} - \varepsilon, \frac{2d}{d-1} + \varepsilon \right)$, for some $\varepsilon = \varepsilon(\Omega)>0$, or
  \item $\Omega$ is a convex polytope and $p \in (1,\infty)$.
\end{enumerate}
Then, for every $\bef \in \bW^{-1,p}(\Omega)$ problem \eqref{eq:ElastAbstract} has a unique solution $\bu \in \bW^{1,p}_0(\Omega)$, which satisfies the bound
\[
  \| \GRAD \bu \|_{\bL^p(\Omega)} \leq C \|\bef \|_{\bW^{-1,p}(\Omega)}.
\]
\end{thrm}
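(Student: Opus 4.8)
The plan is to rewrite \eqref{eq:ElastAbstract} as the statement that the operator $\mathcal{L}\colon\bW^{1,p}_0(\Omega)\to\bW^{-1,p}(\Omega)$, defined through $\langle\mathcal{L}\bu,\bv\rangle=\mu\int_\Omega\GRAD\bu:\GRAD\bv\diff x+(\mu+\lambda)\int_\Omega\DIV\bu\,\DIV\bv\diff x$, is an isomorphism, and to verify this via the Banach--Ne\v{c}as--Babu\v{s}ka theorem. The latter is the special case of Theorem~\ref{thm:CanutoMaday} obtained by taking $\calB\equiv\calC\equiv 0$, $\bX=\bW^{1,p}_0(\Omega)$ and $\bY=\bW^{1,q}_0(\Omega)$; then $K_\calB=\bY$, $K_\calC=\bX$, conditions \eqref{eq:infsupB} and \eqref{eq:infsupC} are vacuous, and well posedness is equivalent to the inf-sup bound \eqref{eq:infsupA1} together with the nondegeneracy \eqref{eq:infsupA2}. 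The bilinear form is bounded on $\bX\times\bY$ by H\"older's inequality, and it is symmetric; since $(p,q)$ is a conjugate pair, the nondegeneracy condition for $(p,q)$ coincides with the inf-sup bound for the form read on $\bW^{1,q}_0(\Omega)\times\bW^{1,p}_0(\Omega)$, \ie, the inf-sup bound at the conjugate exponent. Because the admissible range of $p$ is stable under conjugation (note $\tfrac{d+1}{2d}+\tfrac{d-1}{2d}=1$, so $\tfrac{2d}{d+1}$ and $\tfrac{2d}{d-1}$ are H\"older conjugate), it therefore suffices to establish \eqref{eq:infsupA1}, \ie, that $\mathcal{L}$ is bounded below, for every admissible $p$; the a priori estimate then follows from the inf-sup constant together with the Poincar\'e inequality, which makes $\|\GRAD\,\cdot\,\|_{\bL^p(\Omega)}$ an equivalent norm on $\bW^{1,p}_0(\Omega)$.

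I would first settle the two anchors. For $p=2$ the second Korn inequality provides coercivity of the symmetric form on $\bH^1_0(\Omega)$, so the Lax--Milgram lemma yields the isomorphism $\bH^1_0(\Omega)\to\bH^{-1}(\Omega)$; this makes $p=2$ admissible and supplies the starting point for the Lipschitz case. For the convex polytope I would simply invoke \cite[Theorem 4.3.3]{MR2641539}, which already gives the isomorphism $\bW^{1,p}_0(\Omega)\to\bW^{-1,p}(\Omega)$ for the full range $p\in(1,\infty)$; this disposes of the second bullet and, incidentally, is the same input used in Theorem~\ref{thm:WeightedSpaces}.

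The substance is the Lipschitz case, where $\mathcal{L}$ is the constant-coefficient, strongly elliptic Lam\'e system and \eqref{eq:infsupA1} is exactly the $L^p$ solvability of its homogeneous Dirichlet problem. I would reach the stated interval in two stages. First, a soft stage: the family $\{\mathcal{L}\colon\bW^{1,p}_0(\Omega)\to\bW^{-1,p}(\Omega)\}_p$ interpolates along the $L^p$ scale, so by a perturbation/openness argument the set of exponents for which $\mathcal{L}$ is invertible is open, self-dual, and contains $2$; hence it contains some interval $(2-\delta,2+\delta)$ with $\delta=\delta(\Omega,\mu,\lambda)>0$. Second, a hard stage: to push the endpoints out to $\tfrac{2d}{d+1}$ and $\tfrac{2d}{d-1}$ (up to the $\varepsilon$) one needs the sharp boundary estimates for elastostatics on Lipschitz domains --- nontangential maximal function and square function bounds, and the accompanying atomic/regularity theory --- obtained by layer-potential methods for strongly elliptic systems. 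These yield solvability of the Dirichlet and regularity problems in the claimed range, and the passage from boundary data to a right-hand side $\bef\in\bW^{-1,p}(\Omega)$ then gives the isomorphism, exactly as in the treatment of the scalar Laplacian by Jerison and Kenig, whose range $(\tfrac{2d}{d+1}-\varepsilon,\tfrac{2d}{d-1}+\varepsilon)$ is precisely the one appearing here.

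The main obstacle is this hard stage. The reduction to an inf-sup bound, the use of symmetry and conjugacy, the $p=2$ case, the openness step, and the convex-polytope range are all comparatively routine; but the sharp endpoints are genuinely dictated by the nonsmooth geometry of $\partial\Omega$ and cannot be obtained by soft arguments alone. Accordingly, in the write-up I would present the soft reductions in full and cite the boundary value theory for the Lam\'e system on Lipschitz domains for the endpoint estimates.
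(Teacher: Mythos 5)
Your proposal is correct and takes essentially the same route as the paper: the paper's entire proof consists of citing the $L^p$ solvability theory for the Lam\'e system on Lipschitz domains --- \cite[Section 13.6]{MR2181934} for $d=2$ and \cite[Theorem 14]{MR1781091} for $d=3$, which is precisely the layer-potential boundary value theory you defer to in your ``hard stage'' --- together with \cite[Theorem 4.3.3]{MR2641539} for the convex polytope case, the same citation you use. Your additional scaffolding (the inf-sup reduction, conjugate-exponent duality, the $p=2$ anchor, and the openness argument) is sound but becomes redundant once those references, which already deliver the isomorphism on $\bW^{1,p}_0(\Omega)$, are invoked.
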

\begin{proof}
In the case where $\Omega$ is merely Lipschitz, the stated result is found in \cite[Section 13.6]{MR2181934} and \cite[Theorem 14]{MR1781091} for $d = 2$ and $d =3$, respectively. If $\Omega$ is a convex polyhedron, the result is found in \cite[Theorem 4.3.3]{MR2641539}.
\end{proof}

As a corollary, we obtain well posedness of the problem with point forces.

\begin{crllr}[point forces]
  Let $d \in \{2,3\}$, $\Omega \subset \xR^d$ be a bounded and Lipschitz domain, and $p \in \left(\tfrac{2d}{d+1} - \varepsilon, \tfrac{d}{d-1}\right)$ for some $\varepsilon = \varepsilon(\Omega) >0$. Then, problem \eqref{eq:ElastAbstract} with $\bef$ given as in \eqref{eq:FisSumOfDeltas} has a unique solution $\bu \in \bW^{1,p}_0(\Omega) \cap \bL^2(\Omega)$.
\end{crllr}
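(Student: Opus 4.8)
The plan is to derive this corollary from Theorem~\ref{thm:pSpaces} by two reductions: first, verifying that the singular forcing $\bef$ of \eqref{eq:FisSumOfDeltas} is an admissible datum, \ie, that $\bef \in \bW^{-1,p}(\Omega)$; and second, promoting the resulting $\bW^{1,p}_0(\Omega)$ solution to $\bL^2(\Omega)$ via a Sobolev embedding. The only real content is checking that the stated range of $p$ is simultaneously compatible with all hypotheses involved.

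First I would show that point masses are bounded functionals on $\bW^{1,q}_0(\Omega)$, where $q = p/(p-1)$ is the H\"older conjugate of $p$. Each Dirac term acts by $\langle \bef_k \delta_{\point_k}, \bv \rangle = \bef_k \cdot \bv(\point_k)$, so this pairing is well defined and bounded precisely when $\bv \mapsto \bv(\point_k)$ is continuous on $\bW^{1,q}_0(\Omega)$. By Morrey's embedding $\bW^{1,q}_0(\Omega) \hookrightarrow \bC(\bar\Omega)$, this holds whenever $q > d$, and a short computation gives $q > d \iff p < d/(d-1)$, which is exactly the upper bound imposed on $p$. Summing the $K$ point evaluations yields $\bef \in (\bW^{1,q}_0(\Omega))' = \bW^{-1,p}(\Omega)$, with $\|\bef\|_{\bW^{-1,p}(\Omega)} \le C \sum_{k=1}^K |\bef_k|$.

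Next I would invoke Theorem~\ref{thm:pSpaces}. Since $d/(d-1) < 2d/(d-1) + \varepsilon$, the interval $(2d/(d+1) - \varepsilon, d/(d-1))$ is contained in the Lipschitz range of that theorem, so \eqref{eq:ElastAbstract} has a unique solution $\bu \in \bW^{1,p}_0(\Omega)$ satisfying $\|\GRAD \bu\|_{\bL^p(\Omega)} \le C \|\bef\|_{\bW^{-1,p}(\Omega)}$. To obtain $\bu \in \bL^2(\Omega)$ I would use that, throughout the present range, $p < d/(d-1) \le d$, so $\bW^{1,p}_0(\Omega) \hookrightarrow \bL^{p^*}(\Omega)$ with $p^* = dp/(d-p)$; moreover $p^* \ge 2 \iff p \ge 2d/(d+2)$. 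Because $2d/(d+2) < 2d/(d+1)$, the lower endpoint already forces $p > 2d/(d+2)$ once $\varepsilon$ is small, whence $\bu \in \bL^{p^*}(\Omega) \hookrightarrow \bL^2(\Omega)$ on the bounded domain $\Omega$. Uniqueness in $\bW^{1,p}_0(\Omega) \cap \bL^2(\Omega)$ is inherited from uniqueness in the ambient space $\bW^{1,p}_0(\Omega)$.

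The point to watch is the mutual compatibility of the three constraints $p < d/(d-1)$ (point masses in the dual), $p > 2d/(d+1) - \varepsilon$ (lower end of the well-posedness range), and $p \ge 2d/(d+2)$ ($\bL^2$ regularity). These nest because $2d/(d+2) < 2d/(d+1) \le d/(d-1)$; the last inequality, however, is an \emph{equality} at $d = 3$, where $2d/(d+1) = d/(d-1) = 3/2$. Hence in three dimensions the admissible interval $(3/2 - \varepsilon, 3/2)$ is nonempty only thanks to the $\varepsilon$-widening in Theorem~\ref{thm:pSpaces}, and one must additionally take $\varepsilon \le 3/10$ so that the whole interval stays above $2d/(d+2) = 6/5$ and the $\bL^2$ embedding persists. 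This degeneracy in 3D is the one genuinely delicate point; the rest is bookkeeping of Sobolev exponents, which is why I would shrink the constant $\varepsilon = \varepsilon(\Omega)$ of Theorem~\ref{thm:pSpaces} as needed at the outset.
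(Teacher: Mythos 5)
Your proposal is correct and takes essentially the same route as the paper's proof: the bound $p<d/(d-1)$ gives $q>d$ and the Morrey embedding $\bW^{1,q}_0(\Omega)\hookrightarrow\bC(\bar\Omega)$, so $\bef\in\bM(\Omega)\hookrightarrow\bW^{-1,p}(\Omega)$; Theorem~\ref{thm:pSpaces} then yields the unique solution in $\bW^{1,p}_0(\Omega)$, and the lower bound on $p$ together with a Sobolev embedding (after shrinking $\varepsilon$, which is only genuinely needed in the degenerate case $d=3$ where $2d/(d+1)=d/(d-1)=3/2$) gives $\bu\in\bL^2(\Omega)$. Your explicit exponent bookkeeping, namely $p^\ast\geq 2 \iff p\geq 2d/(d+2)$ and $\varepsilon\leq 3/10$ in three dimensions, simply makes precise what the paper states as ``the lower bound for $p$ with, if necessary in three dimensions, a further restriction of $\varepsilon$.''
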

\begin{proof}
Note that, by assumption, $p<d/(d-1)$ so that its H\"older conjugate satisfies $q>d$. By a Sobolev embedding we have $\bW^{1,q}_0(\Omega) \hookrightarrow \bC(\bar\Omega)$. It thus follows that 
\[
 \bef \in \bM(\Omega) \hookrightarrow \bW^{-1,p}(\Omega) = \left( \bW^{1,q}_0(\Omega) \right)'
\]
and we can invoke Theorem~\ref{thm:pSpaces}. The lower bound for $p$ with, if necessary in three dimensions, a further restriction of $\varepsilon$ guarantee by a Sobolev embedding that $\bu \in \bL^2(\Omega)$. 
\end{proof}

\section{Finite element discretization}
\label{sec:Numerics}

Having presented two ways to analyze problem \eqref{eq:themodel}, we proceed to its discretization. To avoid unnecessary complications, we assume that $\Omega$ is a polytope and let $\polT = \{ \T_h\}_{h>0}$ be a family of quasi-uniform triangulations of $\bar\Omega$; see \cite{CiarletBook,Guermond-Ern,MR2373954} for precise definitions. Moreover, we assume that for every $h>0$, each $T \in \T_h$ is equivalent to either a simplex or cube.

We define the finite element spaces
\begin{equation}
\label{eq:FeSpace}
  \bX_h = \left\{ \bw_h \in \bC(\bar\Omega) : \bw_{h|T} \in \calP(T) \ \forall T \in \T_h \right\},
\end{equation}
where $\calP(T) = \polP_1(T)$ if $T$ is equivalent to a simplex or $\calP(T) = \polQ_1(T)$ if $T$ is equivalent to a cube.

The finite element scheme is formulated as follows: find $\bu_h \in \bX_h$ such that
\begin{equation}
\label{eq:Galerkin}
  \mu \int_\Omega \GRAD \bu_h : \GRAD \bv_h \diff x + (\mu + \lambda)\int_\Omega \DIV \bu_h \DIV \bv_h \diff x = \langle \bef, \bv_h \rangle \quad \forall \bv_h \in \bX_h.
\end{equation}
We note immediately that if $\bef$ is given by \eqref{eq:FisSumOfDeltas}, for every $h>0$ this problem is well posed. The existence and uniqueness of a discrete solution is not an issue. Instead, we want to show an error estimate here.

\begin{thrm}[error estimate]
Let $d \in \{2,3\}$, $\Omega$ be a bounded and convex polytope, and $\bef \in \bM(\Omega)$; in particular \eqref{eq:FisSumOfDeltas} is admissible. Assume that 
$p \in \left(\tfrac{2d}{d+2}, \tfrac{d}{d-1}\right)$. 
If $\bu \in \bW^{1,p}_0(\Omega)$ and $\bu_h \in \bX_h$ denote the solutions to \eqref{eq:ElastAbstract} and \eqref{eq:Galerkin}, respectively, then there is $C>0$ such that for all $\T_h \in \polT$ we have
\[
  \| \bu - \bu_h \|_{\bL^2(\Omega)} \leq C h^{2-\frac{d}{2}}.
\]
\end{thrm}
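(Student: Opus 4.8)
The plan is to use a duality (Aubin--Nitsche) argument in the spirit of \cite{MR812624}, exploiting the convexity of $\Omega$ to gain $\bH^2$ regularity for an auxiliary adjoint problem. Throughout, write $B(\bw,\bv):=\mu\int_\Omega\GRAD\bw:\GRAD\bv\diff x+(\mu+\lambda)\int_\Omega\DIV\bw\DIV\bv\diff x$ for the (symmetric) elasticity bilinear form appearing in \eqref{eq:ElastAbstract} and \eqref{eq:Galerkin}, and let $q$ be the H\"older conjugate of $p$. The hypothesis $p\in(2d/(d+2),d/(d-1))$ is equivalent to $q\in(d,2d/(d-2))$ (with $q\in(d,\infty)$ when $d=2$), so that, by Sobolev embedding, $\bH^2(\Omega)\cap\bH^1_0(\Omega)\hookrightarrow\bW^{1,q}_0(\Omega)\hookrightarrow\bC(\bar\Omega)$ and, dually, $\bW^{1,p}_0(\Omega)\hookrightarrow\bL^2(\Omega)$. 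These two embeddings are exactly what the two-sided bound on $p$ buys us.

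First I would realize the $\bL^2$ norm by duality: $\|\bu-\bu_h\|_{\bL^2(\Omega)}=\sup\{(\bu-\bu_h,\bg):\bg\in\bL^2(\Omega),\ \|\bg\|_{\bL^2(\Omega)}=1\}$. For a fixed such $\bg$, I introduce the adjoint solution $\bz\in\bH^1_0(\Omega)$ defined by $B(\bv,\bz)=(\bv,\bg)$ for all $\bv\in\bH^1_0(\Omega)$, together with the discrete adjoint $\bz_h\in\bX_h$ defined by $B(\bv_h,\bz_h)=(\bv_h,\bg)$ for all $\bv_h\in\bX_h$; the latter is well posed since $B$ is coercive on $\bX_h\subset\bH^1_0(\Omega)$, and subtracting the two definitions shows that $\bz_h$ is the Galerkin approximation of $\bz$. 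Because $\Omega$ is a convex polytope, the Lam\'e system is $\bH^2$ regular, so $\bz\in\bH^2(\Omega)\cap\bH^1_0(\Omega)$ with $\|\bz\|_{\bH^2(\Omega)}\le C\|\bg\|_{\bL^2(\Omega)}$; this is the single analytic ingredient that convexity provides and that drives every rate below.

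The crucial step is an algebraic identity that bypasses any need for a $\bW^{1,p}$ convergence rate of $\bu_h$. Testing the definition of $\bz_h$ with $\bv_h=\bu_h$ gives $(\bu_h,\bg)=B(\bu_h,\bz_h)$, and testing \eqref{eq:Galerkin} with $\bv_h=\bz_h$ gives $B(\bu_h,\bz_h)=\langle\bef,\bz_h\rangle$, so $(\bu_h,\bg)=\langle\bef,\bz_h\rangle$. For the continuous solution, since $\bz\in\bH^2(\Omega)$ satisfies $-\DIV\bsigma(\bz)=\bg$ in $\bL^2(\Omega)$, an integration by parts against $\bu\in\bW^{1,p}_0(\Omega)\cap\bL^2(\Omega)$ yields $(\bu,\bg)=B(\bu,\bz)$; testing \eqref{eq:ElastAbstract} with the admissible $\bv=\bz\in\bW^{1,q}_0(\Omega)=\bY$ then gives $B(\bu,\bz)=\langle\bef,\bz\rangle$. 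Subtracting, and using that $\bef\in\bM(\Omega)$ pairs with continuous functions,
\[
  (\bu-\bu_h,\bg)=\langle\bef,\bz-\bz_h\rangle\le\|\bef\|_{\bM(\Omega)}\,\|\bz-\bz_h\|_{\bL^\infty(\Omega)}.
\]
It then remains to show $\|\bz-\bz_h\|_{\bL^\infty(\Omega)}\le C h^{2-d/2}\|\bg\|_{\bL^2(\Omega)}$. Writing $\Pi_h$ for the Lagrange interpolant onto $\bX_h$, I split $\bz-\bz_h=(\bz-\Pi_h\bz)+(\Pi_h\bz-\bz_h)$. For $\bz\in\bH^2(\Omega)$ with $d\le3$ the interpolation estimate gives $\|\bz-\Pi_h\bz\|_{\bL^\infty(\Omega)}\le Ch^{2-d/2}\|\bz\|_{\bH^2(\Omega)}$, while for the discrete remainder the inverse inequality on the quasi-uniform family $\polT$ together with the $\bL^2$ bounds $\|\bz-\Pi_h\bz\|_{\bL^2(\Omega)}\le Ch^2\|\bz\|_{\bH^2(\Omega)}$ and the Aubin--Nitsche estimate $\|\bz-\bz_h\|_{\bL^2(\Omega)}\le Ch^2\|\bz\|_{\bH^2(\Omega)}$ give
\[
  \|\Pi_h\bz-\bz_h\|_{\bL^\infty(\Omega)}\le Ch^{-d/2}\|\Pi_h\bz-\bz_h\|_{\bL^2(\Omega)}\le Ch^{-d/2}\cdot Ch^2\|\bz\|_{\bH^2(\Omega)}=Ch^{2-d/2}\|\bz\|_{\bH^2(\Omega)}.
\]
Combining these with $\|\bz\|_{\bH^2(\Omega)}\le C\|\bg\|_{\bL^2(\Omega)}$, inserting into the displayed duality inequality, and taking the supremum over $\bg$ yields $\|\bu-\bu_h\|_{\bL^2(\Omega)}\le C\|\bef\|_{\bM(\Omega)}h^{2-d/2}$, as claimed.

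The hard part, and the step I would be most careful about, is the duality identity $(\bu,\bg)=\langle\bef,\bz\rangle$: since $\bu\notin\bH^1_0(\Omega)$ when $p<2$, one cannot simply test the adjoint problem with $\bu$, and the integration by parts must be justified through the matching $\bu\in\bL^2(\Omega)$, $\bz\in\bW^{1,q}_0(\Omega)$ with $\GRAD\DIV\bz\in\bL^2(\Omega)$ that the lower restriction $p>2d/(d+2)$ is precisely designed to guarantee; the vanishing trace of $\bu$ is what kills the boundary contribution. The only other genuinely nonroutine input is the $\bH^2$ regularity of the Lam\'e system on convex polytopes, which underpins the interpolation, inverse, and Aubin--Nitsche estimates used above; everything else is standard machinery on quasi-uniform meshes.
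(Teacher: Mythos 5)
Your proposal is correct and follows essentially the same route as the paper: a duality argument with an adjoint Lam\'e problem, $\bH^2$ regularity from convexity of $\Omega$, an $\bL^\infty$ estimate of order $h^{2-d/2}$ for the dual finite element error, and the pairing $\langle\bef,\bz-\bz_h\rangle\le\|\bef\|_{\bM(\Omega)}\|\bz-\bz_h\|_{\bL^\infty(\Omega)}$. The only differences are presentational and in your favor: you justify explicitly the identity $(\bu,\bg)=B(\bu,\bz)$ (which the paper handles implicitly via the embedding $\bH^2(\Omega)\cap\bH^1_0(\Omega)\hookrightarrow\bW^{1,q}_0(\Omega)$) and you supply the interpolant--inverse-inequality--Aubin--Nitsche chain behind the $\bL^\infty$ bound that the paper cites as a standard estimate.
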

\begin{proof}
We proceed via a duality argument much similar to \cite{MR812624} and \cite[Section 3]{MR4408483}. We begin by noticing that $\be = \bu - \bu_h \in \bL^2(\Omega)$. Let $\bz \in \bH^1_0(\Omega)$ be the unique solution to
\begin{equation}
  \mu \int_\Omega \GRAD \bz : \GRAD \bv \diff x + (\mu + \lambda)\int_\Omega \DIV \bz \DIV \bv \diff x = \int_\Omega \be \cdot \bv \diff x \quad \forall \bv \in \bH^1_0(\Omega).
  \label{eq:z}
\end{equation}
Since $\Omega$ is a convex polytope, we have that (see \cite{MR0878829} and \cite[Theorem 3.1]{MR2019187} for $d=2$ and \cite[Section 4.3.2]{MR2641539} in page 156 for $d=3$) $\bz \in \bH^2(\Omega)$ with
\[
  |\bz|_{\bH^2(\Omega)} \leq C \| \be \|_{\bL^2(\Omega)},
\]
where the constant $C$ depends only on $\Omega$ and, possibly, the Lam\'e coefficients $\mu$ and $\lambda$. Since $\bH^2(\Omega) \hookrightarrow \bC(\bar\Omega)$, a standard interpolation estimate yields that 
\[
  \| \bz - \bz_h \|_{\bL^\infty(\Omega)} \leq C h^{2-\frac{d}{2}} |\bz|_{\bH^2(\Omega)},
\]
where the constant is independent of $\bz$ and $h$. Here, $\bz_h \in \bX_h$ corresponds to the finite element approximation of the solution $\bz$ to problem \eqref{eq:z}.

Finally, owing to the embedding $\bH^2(\Omega) \cap \bH^1_0(\Omega) \hookrightarrow \bW^{1,q}_0(\Omega)$, we may set $\bv = \bz-\bz_h$ in \eqref{eq:ElastAbstract} to arrive at
\begin{align*}
  \| \be \|_{\bL^2(\Omega)}^2 &= \mu \int_\Omega \GRAD \be : \GRAD (\bz - \bz_h) \diff x + (\mu + \lambda)\int_\Omega \DIV \be \DIV (\bz- \bz_h) \diff x \\
    &= \mu \int_\Omega \GRAD \bu : \GRAD (\bz - \bz_h) \diff x + (\mu + \lambda)\int_\Omega \DIV \bu \DIV (\bz- \bz_h) \diff x \\
    &= \langle \bef, \bz - \bz_h \rangle \leq \| \bef \|_{\bM(\Omega)} \| \bz - \bz_h \|_{\bL^\infty(\Omega)} \leq C h^{2-\frac{d}{2}} \| \be \|_{\bL^2(\Omega)},
\end{align*}
as claimed.
\end{proof}

\section*{Acknowledgement}
  AA is partially supported by ANID through FONDECYT project 1210729.
  GC is partially supported by ANID--Subdirecci\'on de Capital Humano/Doctorado Nacional/2020--21200920.
  EO is partially supported by ANID through FONDECYT project 1220156.
  AJS is partially supported by NSF grant DMS-2111228.


\begin{thebibliography}{10}

\bibitem{MR3215609}
H.~Aimar, M.~Carena, R.~Dur\'{a}n, and M.~Toschi.
\newblock Powers of distances to lower dimensional sets as {M}uckenhoupt
  weights.
\newblock {\em Acta Math. Hungar.}, 143(1):119--137, 2014.

\bibitem{MR2019187}
Constantin Bacuta and James~H. Bramble.
\newblock Regularity estimates for solutions of the equations of linear
  elasticity in convex plane polygonal domains.
\newblock volume~54, pages 874--878. 2003.
\newblock Special issue dedicated to Lawrence E. Payne.

\bibitem{MR0972452}
Christine Bernardi, Claudio Canuto, and Yvon Maday.
\newblock Generalized inf-sup conditions for {C}hebyshev spectral approximation
  of the {S}tokes problem.
\newblock {\em SIAM J. Numer. Anal.}, 25(6):1237--1271, 1988.

\bibitem{MR3854357}
Silvia Bertoluzza, Astrid Decoene, Lo\"{\i}c Lacouture, and S\'{e}bastien
  Martin.
\newblock Local error analysis for the {S}tokes equations with a punctual
  source term.
\newblock {\em Numer. Math.}, 140(3):677--701, 2018.

\bibitem{MR3734696}
Silvia Bertoluzza, Astrid Decoene, Lo\"{\i}c Lacouture, and S\'{e}bastien
  Martin.
\newblock Local error estimates of the finite element method for an elliptic
  problem with a {D}irac source term.
\newblock {\em Numer. Methods Partial Differential Equations}, 34(1):97--120,
  2018.

\bibitem{MR4615919}
Wietse~M. Boon and Fred~J. Vermolen.
\newblock Analysis of linearized elasticity models with point sources in
  weighted {S}obolev spaces: applications in tissue contraction.
\newblock {\em ESAIM Math. Model. Numer. Anal.}, 57(4):2349--2370, 2023.

\bibitem{MR2373954}
Susanne~C. Brenner and L.~Ridgway Scott.
\newblock {\em The mathematical theory of finite element methods}, volume~15 of
  {\em Texts in Applied Mathematics}.
\newblock Springer, New York, third edition, 2008.

\bibitem{MR812624}
Eduardo Casas.
\newblock {$L^2$} estimates for the finite element method for the {D}irichlet
  problem with singular data.
\newblock {\em Numer. Math.}, 47(4):627--632, 1985.

\bibitem{CiarletBook}
P.~G. Ciarlet.
\newblock {\em The finite element method for elliptic problems}.
\newblock SIAM, Philadelphia, PA, 2002.

\bibitem{MR2888310}
Carlo D'Angelo.
\newblock Finite element approximation of elliptic problems with {D}irac
  measure terms in weighted spaces: applications to one- and three-dimensional
  coupled problems.
\newblock {\em SIAM J. Numer. Anal.}, 50(1):194--215, 2012.

\bibitem{MR2643399}
Lars Diening, Michael Ru\v{z}i\v{c}ka, and Katrin Schumacher.
\newblock A decomposition technique for {J}ohn domains.
\newblock {\em Ann. Acad. Sci. Fenn. Math.}, 35(1):87--114, 2010.

\bibitem{MR4060457}
Irene Drelichman, Ricardo~G. Dur\'{a}n, and Ignacio Ojea.
\newblock A weighted setting for the numerical approximation of the {P}oisson
  problem with singular sources.
\newblock {\em SIAM J. Numer. Anal.}, 58(1):590--606, 2020.

\bibitem{MR1800316}
Javier Duoandikoetxea.
\newblock {\em Fourier analysis}, volume~29 of {\em Graduate Studies in
  Mathematics}.
\newblock American Mathematical Society, Providence, RI, 2001.
\newblock Translated and revised from the 1995 Spanish original by David
  Cruz-Uribe.

\bibitem{MR4081912}
Ricardo~G. Dur\'{a}n, Enrique Ot\'{a}rola, and Abner~J. Salgado.
\newblock Stability of the {S}tokes projection on weighted spaces and
  applications.
\newblock {\em Math. Comp.}, 89(324):1581--1603, 2020.

\bibitem{Guermond-Ern}
Alexandre Ern and Jean-Luc Guermond.
\newblock {\em Theory and practice of finite elements}, volume 159 of {\em
  Applied Mathematical Sciences}.
\newblock Springer-Verlag, New York, 2004.

\bibitem{MR4269305}
Alexandre Ern and Jean-Luc Guermond.
\newblock {\em Finite elements {II}---{G}alerkin approximation, elliptic and
  mixed {PDE}s}, volume~73 of {\em Texts in Applied Mathematics}.
\newblock Springer, Cham, [2021] \copyright 2021.

\bibitem{MR0643158}
Eugene~B. Fabes, Carlos~E. Kenig, and Raul~P. Serapioni.
\newblock The local regularity of solutions of degenerate elliptic equations.
\newblock {\em Comm. Partial Differential Equations}, 7(1):77--116, 1982.

\bibitem{MR1601373}
Reinhard Farwig and Hermann Sohr.
\newblock Weighted {$L^q$}-theory for the {S}tokes resolvent in exterior
  domains.
\newblock {\em J. Math. Soc. Japan}, 49(2):251--288, 1997.

\bibitem{MR3167999}
A.~I. Garralda-Guillem and M.~Ruiz~Gal\'{a}n.
\newblock Mixed variational formulations in locally convex spaces.
\newblock {\em J. Math. Anal. Appl.}, 414(2):825--849, 2014.

\bibitem{MR4250570}
Ingeborg~G. Gjerde, Kundan Kumar, and Jan~M. Nordbotten.
\newblock A mixed approach to the {P}oisson problem with line sources.
\newblock {\em SIAM J. Numer. Anal.}, 59(2):1117--1139, 2021.

\bibitem{MR2491902}
V.~Gol'dshtein and A.~Ukhlov.
\newblock Weighted {S}obolev spaces and embedding theorems.
\newblock {\em Trans. Amer. Math. Soc.}, 361(7):3829--3850, 2009.

\bibitem{MR0878829}
Pierre Grisvard.
\newblock Le probl\`eme de {D}irichlet pour les \'{e}quations de {L}am\'{e}.
\newblock {\em C. R. Acad. Sci. Paris S\'{e}r. I Math.}, 304(3):71--73, 1987.

\bibitem{MR2495783}
J.~Guzm\'{a}n, D.~Leykekhman, J.~Rossmann, and A.~H. Schatz.
\newblock H\"{o}lder estimates for {G}reen's functions on convex polyhedral
  domains and their applications to finite element methods.
\newblock {\em Numer. Math.}, 112(2):221--243, 2009.

\bibitem{MR2305115}
Juha Heinonen, Tero Kilpel\"{a}inen, and Olli Martio.
\newblock {\em Nonlinear potential theory of degenerate elliptic equations}.
\newblock Dover Publications, Inc., Mineola, NY, 2006.
\newblock Unabridged republication of the 1993 original.

\bibitem{MR3429589}
Bamdad Hosseini, Nilima Nigam, and John~M. Stockie.
\newblock On regularizations of the {D}irac delta distribution.
\newblock {\em J. Comput. Phys.}, 305:423--447, 2016.

\bibitem{MR4219260}
Miroslav Kuchta, Federica Laurino, Kent-Andre Mardal, and Paolo Zunino.
\newblock Analysis and approximation of mixed-dimensional {PDE}s on 3{D}-1{D}
  domains coupled with {L}agrange multipliers.
\newblock {\em SIAM J. Numer. Anal.}, 59(1):558--582, 2021.

\bibitem{LACOUTURE2015187}
Loïc Lacouture.
\newblock A numerical method to solve the stokes problem with a punctual force
  in source term.
\newblock {\em Comptes Rendus Mécanique}, 343(3):187--191, 2015.

\bibitem{MR4266545}
Federica Laurino, Stefano Brambilla, and Paolo Zunino.
\newblock A posteriori model error analysis of 3{D}-1{D} coupled {PDE}s.
\newblock In {\em Numerical mathematics and advanced applications---{ENUMATH}
  2019}, volume 139 of {\em Lect. Notes Comput. Sci. Eng.}, pages 663--671.
  Springer, Cham, [2021] \copyright 2021.

\bibitem{MR4040715}
Federica Laurino and Paolo Zunino.
\newblock Derivation and analysis of coupled {PDE}s on manifolds with high
  dimensionality gap arising from topological model reduction.
\newblock {\em ESAIM Math. Model. Numer. Anal.}, 53(6):2047--2080, 2019.

\bibitem{MR4565982}
Rami Masri, Boqian Shen, and Beatrice Riviere.
\newblock Discontinuous {G}alerkin approximations to elliptic and parabolic
  problems with a {D}irac line source.
\newblock {\em ESAIM Math. Model. Numer. Anal.}, 57(2):585--620, 2023.

\bibitem{MR2181934}
Svitlana Mayboroda and Marius Mitrea.
\newblock The {P}oisson problem for the {L}am\'{e} system on low-dimensional
  {L}ipschitz domains.
\newblock In {\em Integral methods in science and engineering}, pages 137--160.
  Birkh\"{a}user Boston, Boston, MA, 2006.

\bibitem{MR2641539}
Vladimir Maz'ya and J\"{u}rgen Rossmann.
\newblock {\em Elliptic equations in polyhedral domains}, volume 162 of {\em
  Mathematical Surveys and Monographs}.
\newblock American Mathematical Society, Providence, RI, 2010.

\bibitem{MR1781091}
Osvaldo Mendez and Marius Mitrea.
\newblock The {B}anach envelopes of {B}esov and {T}riebel-{L}izorkin spaces and
  applications to partial differential equations.
\newblock {\em J. Fourier Anal. Appl.}, 6(5):503--531, 2000.

\bibitem{MR3906341}
Enrique Ot\'{a}rola and Abner~J. Salgado.
\newblock The {P}oisson and {S}tokes problems on weighted spaces in {L}ipschitz
  domains and under singular forcing.
\newblock {\em J. Math. Anal. Appl.}, 471(1-2):599--612, 2019.

\bibitem{MR4408483}
Enrique Ot\'{a}rola and Abner~J. Salgado.
\newblock On the analysis and approximation of some models of fluids over
  weighted spaces on convex polyhedra.
\newblock {\em Numer. Math.}, 151(1):185--218, 2022.

\bibitem{MR1774162}
Bengt~Ove Turesson.
\newblock {\em Nonlinear potential theory and weighted {S}obolev spaces},
  volume 1736 of {\em Lecture Notes in Mathematics}.
\newblock Springer-Verlag, Berlin, 2000.

\end{thebibliography}

\end{document}